\providecommand{\U}[1]{\protect\rule{.1in}{.1in}}
\providecommand{\bigsqcap}{\mathop{\mathpalette\@updown\bigsqcup}}
\newcommand*{\@updown}
[2]{\rotatebox[origin=c]{180}{$\m@th#1#2$}}
\newtheorem{theorem}{Theorem}
\theoremstyle{plain}
\newtheorem{corollary}{Corollary}
\newtheorem{definition}{Definition}
\newtheorem{example}{Example}
\newtheorem{proposition}{Proposition}
\newtheorem{remark}{Remark}
\numberwithin{equation}{section}
\begin{document}
\title{Gerstenhaber-Schack Bialgebras}
\author{Ronald N. Umble}
\address{Department of Mathematics\\
Millersville University of Pennsylvania\\
Millersville, PA. 17551}
\email{ron.umble@millersville.edu}
\date{September 30, 2024}
\subjclass[2020]{16S80, 16T10, 32G99, 55P35, 55P48, 52B05, 52B11}
\keywords{Hopf algebra, deformation, loop space, bar construction, biassociahedron,
$A_{\infty}$-bialgebra }

\begin{abstract}
A \emph{Gerstenhaber-Schack (G-S) bialgebra} consists of a graded Hopf
algebra  $H$ together with multilinear operations $\omega^n_m\in
\{Hom^{-1}(  H^{\otimes m}, H^{\otimes n}):\linebreak m+n=4\},$ whose sum is 
the degree $-1$ component of a $2$-cocycle in the  G-S complex of $H$. 
A \emph{G-S extension} of a graded Hopf algebra $H$ is a  G-S bialgebra containing $H$. G-S extensions of $H$ are classified up to isomorphism by  the degree $-1$ component of the G-S cohomology group $H_{GS}^{2}(H;H)$. We  exhibit a space $X$ and a non-trivial
topologically induced G-S bialgebra structure on  $H^{\ast}\left(  \Omega X;\mathbb{Z}%
_{2}\right)  .$

\end{abstract}
\maketitle

\vspace*{-0.07in}



\section{Introduction}

A \emph{Gerstenhaber-Schack (G-S) bialgebra}  consists of a  graded Hopf algebra (gHa)
$H$ together with multilinear operations 
$\omega^n_m\in
\{Hom^{-1}(  H^{\otimes m}, H^{\otimes n}): m+n=4\},$  whose sum is
the degree $-1$ component of a  $2$-cocycle in the G-S 
complex of $H$ (antipodes are not assumed). 
A \emph{G-S extension} of a gHa $H$ is a G-S bialgebra containing $H$.  
G-S extensions of $H$ are classified up to isomorphism by the degree $-1$
component of the G-S cohomology group $H_{GS}^{2}(H;H)$.

Let $X$ be a  $\mathbb{Z}_{2}$-formal space. The bar construction $BA:=BH^{\ast}(X;\mathbb{Z}_{2})$ with standard differential and cofree
coproduct $\Delta_{BA}$ is a differential graded (dg) coalgebra model for the singular
cochains $S^{\ast}\left(  \Omega X;\mathbb{Z}_{2}\right).$  A homotopy Gerstenhaber algebra (hGa) structure on $H^{\ast}(X;\mathbb{Z}_{2})$ lifts to $BA$ and the induced product is Hopf compatible with $\Delta_{BA}$. Furthermore, under the right conditions, the dgHa
structure on $BA$ lifts to $H:=H^{\ast}(BA)  $ so that $H$ is a gHa model for $H^{\ast}\left(  \Omega X;\mathbb{Z}_{2}\right)
.$

When $H$ is free, there is a cocycle-selecting homomorphism $g:H\rightarrow
BA$ and an $A_{\infty}$-bialgebra structure $\omega$ on $H$ induced by
transferring the dgHa structure on $BA$ to $H$ along $g$. Since $H$
has  zero differential, $\omega$ specializes to a G-S bialgebra by forgetting all
operations  $\{\omega^n_m:m+n>4\}$ and all $A_{\infty}$-bialgebra structure
relations encoded by the biassociahedra $\{KK^n_m:m+n>5\}$ (see
Definitions  \ref{Defn1} and \ref{GSb}).

The article is organized as follows: Section 2 reviews the definition of an
$A_{\infty}$-bialgebra and defines $A_{k}$-bialgebras for $3\leq k < \infty$.
Section 3 reviews the definition of an $A_{\infty}$-bialgebra morphism and
defines morphisms of $A_{k}$-bialgebras for $3\leq k < \infty$. Section 4
reviews the G-S complex of a dgHa and presents our main result:  \medskip

\noindent\textbf{Theorem 1}. \emph{Given a gHa $(H,\mu,\Delta)$ and multilinear operations
	$\omega:=\{ \omega^1_3,\omega^2_2,\omega^3_1\}\subset Hom^{-1}(H^{\otimes m},H^{\otimes n})$, let $z:=\omega^1_3+\omega^2_2+\omega^3_1.$ Then}

\begin{enumerate}

\item[1.] $(H,\mu,\Delta,\omega)$\emph{ is a G-S extension if and only if 
$z$ is the degree $-1$ component of a  $2$-cocycle in the G-S 
complex of $H$.}\smallskip

\item[2.] \emph{G-S extensions $\omega$ and $\omega^{\prime}$
are equivalent if and only if $cls(z-z^{\prime})=0.$} 
\end{enumerate}
\vspace{.02in}

\noindent Section 5 reviews the Transfer Theorem and the relevant special case of its proof (the Transfer
Algorithm),  reviews the definition of a hGa, and exhibits a space $X$ with a
non-trivial  topologically induced G-S bialgebra structure on $H^*(BA)\approx H^{\ast}\left(  \Omega
X;\mathbb{Z} _{2}\right)  $.

\section{Biassociahedra and $A_{k}$-bialgebras}

In his 1963 seminal papers \textquotedblleft Homotopy associativity of
$H$-spaces I,II\textquotedblright\ \cite{Stasheff}, Jim Stasheff constructed
the associahedra $K:=\{K_{n}\}_{n\geq2}$ and used them to define $A_{n}$-algebras for
$2\leq n\leq\infty$. In \cite{SU1} and \cite{SU2}, S. Saneblidze and the
current author constructed the biassociahedra $KK:=\{KK^n_m\}_{m+n\geq3}$
and used them to define $A_{\infty}$-bialgebras; $A_{k}$-bialgebras for $3\leq k<\infty$ are
defined in  Definition \ref{Defn1} below.

The \emph{biassociahedron} $KK^n_m$ is a contractible $(m+n-3)$-dimensional
polytope, and $KK^n_1\cong KK^1_n$ is Stasheff's associahedron $K_{n}$.
The $2$-cell and edges of $KK^2_3$ pictured 
in Figure 1 are labeled by upward-directed graphs, each representing some composition 
of $\omega$-operations. In dimensions $\leq3$, the biassociahedra $KK$ constructed in \cite{SU2} agree 
with the polytopes under the same name and symbol constructed by M. Markl in \cite{Markl}.  \bigskip

\hspace{1in}\includegraphics[width=2.6in,height=2.5in]{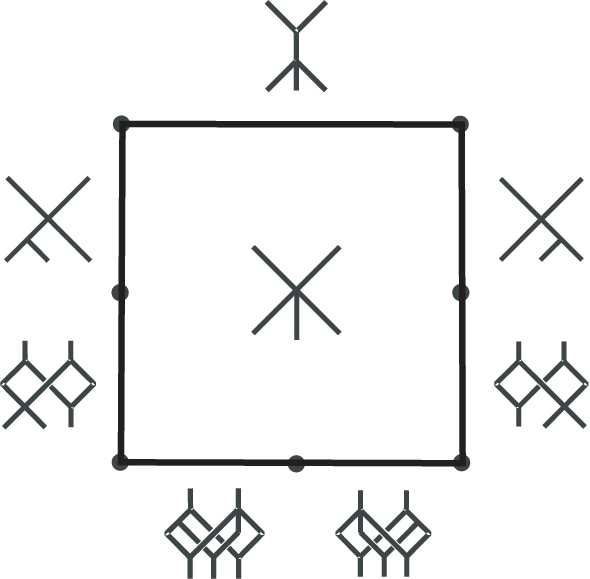}
\vspace{0.2in}

\begin{center}
Figure 1. The biassociahedron $KK^2_3$. 
\end{center}

\vspace{0.2in}

Let $R$ be a commutative ring with unity, let $\left(  A,d\right)  $ be a dg
$R$-module (dgm) with $|d|=+1$ and denote the tensor module of $A$ by $TA$.  The differential $\nabla$ on $Hom^{\ast}\left(  TA,TA\right)  $ induced by $d$ is
defined on $f\in Hom^{p}(A^{\otimes m},A^{\otimes n})$ by
\[
\nabla f:=d_{(n)} f-(-1)^{p} f\hspace{.03in}d_{(m)},
\]
where $d_{\left(  k\right)  }:=\sum_{s=0}^{k-1}\mathbf{1}^{\otimes s}\otimes
d\otimes\mathbf{1}^{\otimes k-s-1}$ is the linear extension. Denote the chain complex of cellular
chains on a polytope $P$ by $\left(  CC_{\ast}\left(  P\right)  ,\partial
\right)  $ and the top-dimensional cell of $KK^n_m$ by $\theta_{m}^{n}$.

\begin{definition}
\label{Defn1} Let $3\leq k\leq\infty$. An $A_{k}$\textbf{-bialgebra }consists
of a \textbf{dgm} $\left(  A,d\right)  $ together with \textbf{multilinear operations}
\[
\omega=\{ \omega^n_m\in Hom^{3-m-n}\left(  A^{\otimes m},A^{\otimes
n}\right)  : m+n\geq3 \},
\]
where $m+n\leq k$ when $k<\infty$, and \textbf{structure maps}
\[
\alpha=\{\alpha_m^n:(CC_{\ast}\left(  KK_{m,n}\right)  ,\partial)
\rightarrow\left(  Hom^{3-m-n}\left(  A^{\otimes m},A^{\otimes n}\right)
,\nabla\right)  \},
\]
where $\alpha_m^n$ is a chain map of matrads such that $\alpha_m^n
(\theta_{m}^{n})=\omega^n_m.$ The $KK^n_m$ \textbf{structure relation}  is
\begin{equation*}
\nabla\omega^n_m=(\nabla\circ \alpha_m^n) \theta_{m}^{n}  =(\alpha_m^n\circ\partial)\theta_{m}^{n}. 
\end{equation*}
An $A_k$-bialgebra $A$ is \textbf{strict }if $\nabla\omega_m^n=0$ for all $m$ and $n$. 
\end{definition}

Stasheff's $A_{n}$-algebras are $A_{n+1}$-bialgebras with $\omega^j_i=0$ for all $j>1$. 
Just as the operadic
structure of $K$ encodes the structure relations in $A_{n}$-algebras, the
matradic structure of $KK$ encodes the structure relations in $A_{k}$-bialgebras.

For notational simplicity denote $\mu:=\omega^1_2$ and
$\Delta:=\omega^2_1$, and denote the canonical permutation of
tensor factors by $\sigma_{m,n}:\left(A_1\otimes \cdots \otimes A_m\right)^{\otimes n}$ $\mapsto
A_1^{\otimes n}\otimes \cdots \otimes A_m^{\otimes n}$. The $KK^n_m$ structure relations
for $m+n\leq4$ are
\begin{equation}
\begin{tabular}
[c]{ccc}
$\nabla\mu=0$ & $\Leftrightarrow$ & $d\text{ is a derivation }$\\
$\nabla\Delta=0$ & $\Leftrightarrow$ & $d\text{ is a coderivation }$\\
$\nabla\omega^1_3=\mu\left(  \mu\otimes\mathbf{1}-\mathbf{1}\otimes\mu\right)$
& $\Leftrightarrow$ & $\mu\text{ is homotopy associative}$\\
$\nabla\omega^2_2=\left(  \mu\otimes\mu\right)  \sigma_{2,2}\left(
\Delta\otimes\Delta\right)  -\Delta\mu$ & $\Leftrightarrow$ & $\mu\text{ and }\Delta\text{ are homotopy compatible}$\\
$\nabla\omega^3_1=\left(  \mathbf{1}\otimes\Delta-\Delta\otimes
\mathbf{1}\right)  \Delta$ & $\Leftrightarrow$ & $\Delta\text{ is homotopy coassociative.}$
\end{tabular}
\label{Hopf}
\end{equation}
The $KK^n_m$ structure relations for $m+n=5$ are displayed in (\ref{KKnm}).

While strict $A_{4}$-bialgebras are gHa's by the relations in (\ref{Hopf}), the operations $\omega_m^n$, $m+n=4$, are unconstrained.  A ``Gerstenhaber-Schack 
bialgebra" is an $A_4$-bialgebra with zero differential and appropriately constrained operations $\omega_m^n$, $m+n=4$ (see Definition \ref{GSb}).

\section{Bimultiplihedra and Morphisms of $A_{k}$-Bialgebras}

In \cite{Stasheff}, J. Stasheff also introduced the multiplihedra 
$J:=\{J_{n}\}_{n\geq1}$ and used them to define
morphisms of $A_{n}$-algebras for $2\leq n \leq\infty$.  In \cite{SU2}, S.
Saneblidze and the current author introduced the bimultiplihedra
$JJ:=\{JJ^n_m\}_{m+n\geq2}$ and used them to define morphisms of $A_{\infty}
$-bialgebras; morphisms of $A_{k}$-bialgebras are defined in
Definition \ref{Defn2} below. The \emph{bimultiplihedron} $JJ^n_m$ is a
contractible $(m+n-2)$-dimensional polytope, and $JJ^n_1\cong JJ^1_n$ is
Stasheff's multiplihedron $J_{n}$.

Given dgm's $\left(  A,d_{A}\right)  $ and $\left(  B,d_{B}\right)  ,$ let
$\nabla$ denote the induced
differential on $Hom(TA,TB)$, and denote the top-dimensional cell of $JJ^n_m$ by $\mathfrak{f}
_{m}^{n}.$

\begin{definition}
\label{Defn2} Let $\left(  A,d_{A},\omega_{A}\right)  $ and $\left(
B,d_{B},\omega_{B}\right)  $ be $A_{k}$-bialgebras. A \textbf{morphism from}
$A$ \textbf{to} $B$ consists of multilinear maps
\[
G=\{g_{m}^{n}\in Hom^{2-m-n}(A^{\otimes m},B^{\otimes n}):m+n\geq2\},
\]
where $m+n\leq k$ when $k<\infty$, and \textbf{structure maps}
\[
\beta=\{\beta_m^n:\left(  CC_{\ast}\left(  JJ^n_m\right)  ,\partial
\right)  \rightarrow\left( Hom^{2-m-n}(A^{\otimes m},B^{\otimes n}),\nabla\right)  \},
\]
where $\beta_m^n$ is a chain map of relative matrads such that $\beta
_m^n\left(  \mathfrak{f}_{m}^{n}\right)  =g_{m}^{n}$. The $JJ^n_m
$-\textbf{structure relation} is
\begin{equation*}
\nabla g_{m}^{n}=(\nabla\circ \beta) \mathfrak{f}_{m}^{n} =(\beta\circ \partial)\mathfrak{f}_{m}^{n}.
\end{equation*}
Denote a morphism $G$ from $A$ to $B$ by $G:A\Rightarrow B$. A morphism
$\Phi=\{\phi_{m}^{n}\}:A\Rightarrow B$ is an \textbf{isomorphism} if $\phi
_{1}^{1}:A\rightarrow B{\ }$is an isomorphism of dgm's. 
\end{definition}

Stasheff's morphisms of $A_{n}$-algebras are morphisms of $A_{n+1}$-bialgebras
with $g_{i}^{j}=0$ for all $j>1$. Just as the
relative operadic structure of $J$ encodes the structure relations in a
morphism of $A_{n}$-algebras, the relative matradic structure of $JJ$ encodes
the structure relations in a morphism of $A_{k}$-bialgebras.

\begin{remark}
If $\Phi=\{\phi_{m}^{n}\}:A\Rightarrow A$ is an isomorphism, let $g=(\phi
_{1}^{1})^{-1}$ and define $\psi_{m}^{n}:=g^{\otimes n}\phi_{m}^{n};$ then
$\Psi=\{\psi_{m}^{n}\}:A\Rightarrow A$ is an isomorphism with $\psi_{1}
^{1}=\mathbf{1}_{A}$. Thus, when $\Phi:$ $A\Rightarrow A$ is an isomorphism,
we always assume that $\phi_{1}^{1}=\mathbf{1}_{A}.$ 
\end{remark}

To accommodate subscripts let $\omega^{n,m}:=\omega^n_m$, and for notational simplicity 
let $\mu_{X}:=\omega_{X}^{1,2}$ and $\Delta_{Y}:=\omega
_{Y}^{2,1}$. The $JJ^n_m$ structure relations for $2\leq m+n\leq4$ are
\medskip

$\nabla g_{1}^{1}=0$\hspace{1.06in} $\Leftrightarrow$\hspace{.1in} $g:=g_{1}^{1}$ is a chain map\medskip

$\nabla g_{2}^{1}=g\mu_{A}-\mu_{B} \left(  g\otimes g\right)  $\hspace{.1in}
$\Leftrightarrow$  \hspace{.1in}$g$ is homotopy multiplicative\medskip

$\nabla g_{1}^{2}=\Delta_{B}g-\left(  g\otimes g\right)  \Delta_{A}$
\hspace{.05in}$\Leftrightarrow$ \hspace{.1in}$g$ is homotopy comultiplicative\medskip

$\nabla g_{3}^{1}=\ g\omega_{A}^{1,3}\ -\mu_{B}\left(  g\otimes g_{2}
^{1}-g_{2}^{1}\otimes g\right)  \ +\ g_{2}^{1}\left(  \mu_{A}\otimes
\mathbf{1}-\mathbf{1}\otimes\mu_{A}\right)  \ -\ \omega_{B}^{1,3}g^{\otimes
3}$\medskip

$\nabla g_{2}^{2}=(g\otimes g)\omega_{A}^{2,2}-(\mu_{B}\otimes\mu_{B}
)\sigma_{2,2}(\Delta_{B}g\otimes g_{1}^{2}+g_{1}^{2}\otimes(g\otimes
g)\Delta_{A})+g_{1}^{2}\mu_{A}$\medskip

$\hspace{0.5in}-\left(  \mu_{B}(g\otimes g)\otimes g_{2}^{1}+g_{2}^{1}\otimes
g\mu_{A}\right)  \sigma_{2,2}(\Delta_{A}\otimes\Delta_{A})+\Delta_{B}g_{2}
^{1}-\omega_{B}^{2,2}(g\otimes g)\medskip$

$\nabla g_{1}^{3}=\ g^{\otimes3}\omega_{A}^{3,1}\ +\ \left(  g\otimes
g_{1}^{2}-g_{1}^{2}\otimes g\right)  \Delta_{A}+\left(  \mathbf{1}
\otimes\Delta_{B}-\Delta_{B}\otimes\mathbf{1}\right)  g_{1}^{2}\ -\omega
_{B}^{3,1}g.$

\section{The G-S Complex of a DG Hopf Algebra}

Let $\left(  H,d,\mu,\Delta\right)  $ be a dgHa with $\left\vert d\right\vert
=+1$ (when $\left\vert d\right\vert =-1$ the construction is completely
dual).  For $m\geq1,$ define left and right $H$-comodule actions $\lambda
_{m},\rho_{m}:H^{\otimes m}\rightarrow H^{\otimes m+1}$ by
\[
\begin{array}
[c]{l}%
\lambda_{1}=\rho_{1}:=\Delta\medskip\\
\lambda_{m}:=\Big(  \mu\left(  \mu\otimes\mathbf{1}\right)  \cdots\left(
\mu\otimes\mathbf{1}^{\otimes m-2}\right)  \otimes\mathbf{1}^{\otimes
m}\Big)  \sigma_{2,m}\Delta^{\otimes m}\medskip\\
\rho_{m}:=\Big(  \mathbf{1}^{\otimes m}\otimes\mu\left(  \mathbf{1}\otimes
\mu\right)  \cdots\left(  \mathbf{1}^{\otimes m-2}\otimes\mu\right)
\Big)  \sigma_{2,m}\Delta^{\otimes m}.
\end{array}
\]
For $n\geq1,$ define left and right $H$-module actions $\lambda^{n},\rho
^{n}:H^{\otimes n+1}\rightarrow H^{\otimes n}$ by
\[
\begin{array}
[c]{l}%
\lambda^{1}=\rho^{1}:=\mu\medskip\\
\lambda^{n}:=\mu^{\otimes n}\sigma_{n,2}\Big(  \left(  \Delta\otimes
\mathbf{1}^{\otimes n-2}\right)  \cdots\left(  \Delta\otimes\mathbf{1}\right)
\Delta\otimes\mathbf{1}^{\otimes n}\Big)\medskip\\
\rho^{n}:=\mu^{\otimes n}\sigma_{n,2}\Big(  \mathbf{1}^{\otimes n}
\otimes\left(  \mathbf{1}^{\otimes n-2}\otimes\Delta\right)  \cdots\left(
\mathbf{1}\otimes\Delta\right)  \Delta\Big) .
\end{array}
\]
Then $H^{\underline{\otimes}m}:=\left(  H^{\otimes m},\lambda_{m},\rho
_{m}\right)  $ is an $H$-bicomodule, $H^{\overline{\otimes}n}:=\left(
H^{\otimes n},\lambda^{n-1},\rho^{n-1}\right)  $ is an $H$-bimodule (when
$n=1$ the bimodule actions are undefined and $H^{\overline{\otimes}1}:=H),$
and $\{Hom^{p}(H^{\underline{\otimes}m},H^{\overline{\otimes}n}):p\in
\mathbb{Z}$ and $m,n\geq1\}$ is a trigraded $H$-bidimodule.

The linear extension 
\[
d_{\left(  k\right)  }:=\sum\limits_{s=0}^{k-1}\mathbf{1}^{\otimes s}\otimes
d\otimes\mathbf{1}^{\otimes k-s-1}
\]
and (co)bar differentials (forgetting shift of dimensions)
\[
\partial_{\left(  m\right)
}:=\sum\limits_{s=0}^{m-1}\left(  -1\right)  ^{s}\text{ }\mathbf{1}^{\otimes
s}\otimes\mu\otimes\mathbf{1}^{\otimes m-s-1}
\text{ and }\delta_{\left(  n\right)  }:=\sum\limits_{s=0}^{n-1}\left(
-1\right)  ^{s}\text{ }\mathbf{1}^{\otimes s}\otimes\Delta\otimes
\mathbf{1}^{\otimes n-s-1}
\]
induce strictly commuting differentials $\nabla,$ $\partial,$ and $\delta$ on
$\{Hom^{p}(H^{\underline{\otimes}m},H^{\overline{\otimes}n})\},$ which act on
an element $f$ of tridegree $\left(  p,m,n\right)  $ by
\begin{align*}
\nabla f  &  :=d_{\left(  n\right)  }f-\left(  -1\right)  ^{p}fd_{\left(
m\right)  }\\
\partial f  &  :=\lambda^{n}\left(  \mathbf{1}\otimes f\right)  -f\partial
_{\left(  m\right)  }-\left(  -1\right)  ^{m}\rho^{n}\left(  f\otimes
\mathbf{1}\right) \\
\delta f  &  :=\left(  \mathbf{1}\otimes f\right)  \lambda_{m}-\delta_{\left(
n\right)  }f-\left(  -1\right)  ^{n}\left(  f\otimes\mathbf{1}\right)
\rho_{m}.
\end{align*}
Note that $\nabla:\left(  p,m,n\right)  \mapsto\left(  p+1,m,n\right)  ,$
$\partial:\left(  p,m,n\right)  \mapsto\left(  p,m+1,n\right)  ,$ and
$\delta:\left(  p,m,n\right)  \mapsto\left(  p,m,n+1\right)  .$

The \emph{G-S complex of} $H$ is the triple complex $( Hom^{\ast
}(H^{\underline{\otimes}\ast},H^{\overline{\otimes}\ast}),\nabla
,\partial,\delta) .$ The subspace of total $r$-cochains in degree $p$ is
\[
C_{GS}^{r,p}\left(  H,H\right)  :=\bigoplus\limits_{p+m+n=r+1}Hom^{p}\left(
H^{\underline{\otimes}m},H^{\overline{\otimes}n}\right)
\]
and the total differential $D$ acts on a cochain $f$ of tridegree $\left(
p,m,n\right)  $ by
\[
Df: =\left(  -1\right)  ^{m+n}\nabla f+\partial f +\left(  -1\right)
^{m}\delta f,
\]
where the signs are chosen so that $D^{2}=0$ and the restriction of $D$ to
the  subspace $p=0$ agrees with the total differential on the G-S
double complex of an ungraded Hopf algebra \cite{GS}.

The subspace of total $r$-cocycles in degree $p$ is denoted by $Z_{GS}
^{r,p}\left(  H;H\right)  $. A general $2$-cocycle has components $\varphi^n_m$ of tridegree $\left(
p,m,n\right)  $ with $p+m+n=3$, and is an infinitesimal in the deformation 
theory of dgHa's \cite{Umble1}. A $2$-cocycle with $m+n\leq4$ is pictured in Figure 2. 
The $r^{th}$ G-S cohomology group in degree $p$ with coefficients in $H$ is
$H_{GS}^{r,p}\left(  H;H\right)  :=H^{\ast}\left(  C_{GS}^{r,p}\left(
H,H\right)  ,D\right)  . $ \vspace{0.2in}

\hspace{0.7in}\includegraphics[width=4.7in,height=2.32in]{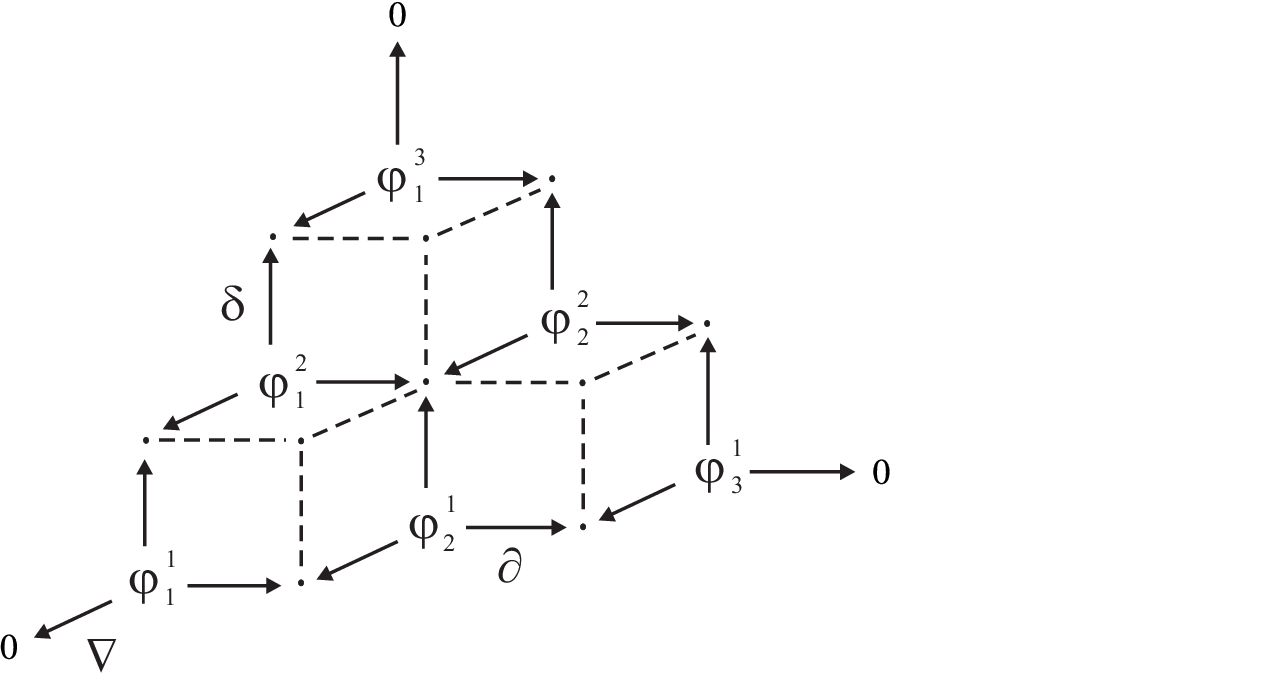}
\bigskip

\begin{center}
\vspace{-.1in}

Figure 2. A $2$-cocycle $\varphi^1_1+\varphi^1_2+\varphi^2_1+\varphi^1_3+\varphi^2_2+\varphi^3_1$ with components of tridegree $\left(  3-m-n,m,n\right)  $ and $m+n\leq4$. 
\end{center}
\vspace{0.2in}

It is truly remarkable that the $KK^n_m$ structure relations with $m+n=5$ and the $JJ^n_m$ structure relations with $m+n=4$ can be expressed in terms of G-S differentials.

\begin{example}
	To express the $KK^2_3$-structure relation in terms of G-S differentials, recall that $\alpha^n_m\left(\theta_{m}^{n}\right)=\omega^n_m$. Reading the graphical labels in Figure 1 from top-down and left-to-right, express each as a composition of $\omega$-operations. Then up to sign
	$$\nabla{\omega_{3} ^{2}}=\Delta \omega_{3}^{1} + \omega_{2}^{2}(\mu\otimes\mathbf{1}+\mathbf{1}\otimes\mu)
	+(\mu\otimes\mu)\sigma_{2,2}(\omega_{2}^{2}\otimes\Delta+\Delta\otimes\omega_{2}^{2})$$
	$$+\Big(\mu(\mu\otimes\mathbf{1})\otimes\omega_{3}
	^{1}+\omega_{3}^{1}\otimes\mu(\mathbf{1}\otimes\mu)\Big)\sigma_{2,3}\Delta^{\otimes3}.$$
	By definition,
	\[
	\partial\omega^2_2=\omega^2_2(\mu\otimes\mathbf{1}+\mathbf{1}\otimes
	\mu)+(\mu\otimes\mu)\sigma_{2,2}(\omega^2_2\otimes\Delta+\Delta\otimes\omega
	^2_2)\text{ and }
	\]
	\[
	\delta\omega^1_3=\Delta\omega^1_3+\Big(\mu(\mu\otimes\mathbf{1})
	\otimes\omega^1_3+\omega^1_3\otimes\mu(\mathbf{1}\otimes\mu)\Big)\sigma
	_{2,3}\Delta^{\otimes3}
	\]
	so that
	\[
	\nabla\omega^2_3 = \partial\omega^2_2 +\delta\omega^1_3.
	\]
	\vspace{.01in}
	
	\noindent Other $KK_m^n$ structure relations with $m+n=5$ can be expressed in a similar way. 
\end{example}

The $KK_m^n$ structure relations with $m+n=5$ are
	\begin{equation}
	\begin{tabular}
		[c]{cccc}
		$KK^1_4:$ & $\nabla\omega^1_4=\partial\omega^1_3$ & $\overset{\nabla= 0_{\mathstrut}}{\Rightarrow}$ & $\partial\omega^1_3=0$\bigskip\\
		$KK^2_3:$ & $\nabla\omega^2_3=\partial\omega^2_2-\delta\omega^1_3$ & $\Rightarrow$ & $\partial\omega^2_2-\delta\omega^1_3$=0\bigskip\\
		$KK^3_2:$ & $\nabla\omega^3_2=\partial\omega^3_1+\delta\omega^2_2$ & $\Rightarrow$ & $\partial\omega^3_1+\delta\omega^2_2=0$\bigskip\\
		$KK^4_1:$ & $\nabla\omega^4_1=-\delta\omega^3_1$ & $\Rightarrow$ & $\delta\omega^3_1=0$.\bigskip
	\end{tabular}
	\label{KKnm}
\end{equation}
The strict relations in (\ref{KKnm}) provide the linkage we need to form the degree $-1$ component $\omega^1_3+\omega^2_2+\omega^3_1$ of a strict $2$-cocycle (see Figure 3).
\vspace{.2in}

\begin{center}
	$
\begin{array}
	[c]{ccccccc}%
	\delta\omega_{1}^{3}=0 &  &  &  &  &  &\vspace{-.1in} \\
	&  &  &  &  &  & \\
	\uparrow &  &  &  &  &  & \vspace{-.1in}\\
	&  &  &  &  &  & \\
	\omega_{1}^{3} &\hspace{-.15in} \longrightarrow & \partial\omega_{1}^{3}+\delta\omega_{2}^{2}=0
	&  &  &  & \vspace{-.1in}\\
	&  &  &  &  &  & \\
	&  & \uparrow &  &  &  & \vspace{-.1in}\\
	&  &  &  &  &  & \\
	&  & \omega_{2}^{2} & \hspace{-.3in}\longrightarrow & \partial\omega_{2}^{2}-\delta
	\omega_{3}^{1}=0 &  & \vspace{-.1in}\\
	&  &  &  &  &  & \\
	&  &  &  & \uparrow &  & \vspace{-.1in}\\
	&  &  &  &  &  & \\
	&  &  &  & \omega_{3}^{1} & \hspace{-.3in}\longrightarrow & \partial\omega_{3}^{1}=0
\end{array}
$
\vspace{.2in}

	Figure 3. The degree -1 component of a strict $2$-cocycle.
\end{center}
\vspace{.2in}

Similarly, the $JJ^n_m$ structure relations for an isomorphism $\Phi:\left(  H,d,\mu,\Delta,\omega_{A}\right)  \Rightarrow
\left(  H,d,\mu,\Delta,\omega_{B}\right)  $ of $A_{4}$-bialgebras are 
\begin{equation}
	\begin{tabular}
		[c]{cc}
		$JJ_{1,3}:$ & $\nabla\phi_{3}^{1}=\omega_{A}^{1,3}-\partial\phi_{2}^{1}
		-\omega_{B}^{1,3}$\\
		$JJ_{2,2}:$ & $\nabla\phi_{2}^{2}=\omega_{A}^{2,2}-\partial\phi_{1}^{2}
		-\delta\phi_{2}^{1}-\omega_{B}^{2,2}$\\
		$JJ_{3,1}:$ & $\nabla\phi_{1}^{3}=\omega_{A}^{3,1}+\delta\phi_{1}^{2}
		-\omega_{B}^{3,1}.$%
	\end{tabular}
	\ \label{JJnm-6}
\end{equation}

Indeed, the algebraic representations of the $2$-dimensional biassociahedra
and bimultiplihedra displayed in (\ref{KKnm}) and (\ref{JJnm-6}) appear quite naturally and
were hiding in the G-S complex more than a decade before the corresponding
polytopes appeared in [7].

The differentials $\nabla,$ $\partial,$ and $\delta$,  respectively defined in terms of
$d,\ \mu,\ \text{and}\ \Delta$, capture the interactions of a higher order
operation with the underlying dgHa structure but completely miss its
interactions with the higher order structure. Consequently, the
$KK^n_m$ structure relations cannot be expressed in terms of the differentials
 in the G-S complex when $m+n\geq6$.

Now by definition, an $A_4$-bialgebra $(H,\mu,\Delta,\omega^1_3,\omega^2_2,\omega^3_1)$ (with zero differential) 
is a gHa with three higher order operations of degree $-1$.  By homogeneity, $D(\omega^1_3+\omega^2_2+\omega^3_1)=0$ 
if and only if $\delta\omega^3_1=\partial\omega^2_2-\delta\omega^1_3=\partial\omega^3_1
+\delta\omega^2_2=\delta\omega^3_1=0$. Thus we define:

\begin{definition}
\label{GSb} An $A_{4}$-bialgebra $(H,\mu,\Delta,\omega^1_3,\omega^2_2,\omega^3_1)$ is a \textbf{Gerstenhaber-Schack bialgebra} if
\begin{equation}
D(\omega^1_3+\omega^2_2+\omega^3_1)=0. \label{cocycle}
\end{equation}
A \textbf{G-S extension} of a gHa $(H,\mu,\Delta)$
is a G-S bialgebra of the form $(H,\mu,\Delta,\omega:=\{\omega^1_3,\omega^2_2,\omega^3_1\})$; we sometimes refer to
$\omega$ as a G-S extension when the context is clear. G-S extensions
$\omega$ and $\omega^{\prime}$ are \textbf{equivalent }if there exists an
isomorphism $\Phi:\left(  H,\mu,\Delta,\omega\right)  \Rightarrow\left(
H,\mu,\Delta,\omega^{\prime}\right)  $ of $A_{4}$-bialgebras. A G-S extension
$\omega$ is \textbf{trivial }if $\left(  H,\mu,\Delta,\omega\right)
\cong\left(  H,\mu,\Delta\right)  $. 
\end{definition}


\begin{theorem}
\label{four}Given a gHa $(H,\mu,\Delta)$ and multilinear operations
$\omega:=\{ \omega^1_3,\omega^2_2,\omega^3_1\}\subset Hom^{-1}(H^{\otimes m},H^{\otimes n})$, let $z:=\omega^1_3+\omega^2_2+\omega^3_1.$ Then

\begin{enumerate}

\item[1.] $\omega$ is a G-S extension if and only if $D(z)=0 $.\smallskip

\item[2.] G-S extensions $\omega$ and $\omega^{\prime}$ are equivalent if
and only if $cls(z-z^{\prime})=0.$ 
\end{enumerate}
\end{theorem}

\noindent\textbf{Proof.} The proof of Part 1 is trivial.

\noindent Proof of part 2: $\omega\sim\omega^{\prime}$ if and only if there exists an
isomorphism $\Phi=\{\mathbf{1}_{H},\phi_{m}^{n}:n+m=3,4\}:(H,\mu,\Delta
,\omega)\Rightarrow(H,\mu,\Delta,\omega)$ of $A_{4}$-bialgebras if and only
if  $\Phi$ satisfies the $JJ^n_m$-structure relations for $m+n=3,4$, which
hold  trivially when $m+n=3$. Since $\nabla=0$, the $JJ^n_m$-structure
relations  in (\ref{JJnm-6}) reduce to
\[
\partial\phi_{2}^{1}=\omega^1_3-(\omega^{\prime})^1_3
\]
\begin{equation}
\partial\phi_{1}^{2}+\delta\phi_{2}^{1}=\omega^2_2-(\omega^{\prime} )^2_2
\label{JJ-reduced}%
\end{equation}
\[
-\delta\phi_{1}^{2}=\omega^3_1-(\omega^{\prime})^3_1.
\]
Therefore $\omega\sim\omega^{\prime}$ if and only if there exists a $\left(
1,-1\right)  $-cochain $\phi_{2}^{1}+\phi_{1}^{2}$ such that the structure
relations in (\ref{JJ-reduced}) hold if and only if
\[
D\left(  \phi_{2}^{1}+\phi_{1}^{2}\right)  =\partial\phi_{2}^{1}+\left(
\partial\phi_{1}^{2}+\delta\phi_{2}^{1}\right)  -\delta\phi_{1}^{2}
=\omega-\omega^{\prime}.
\]
$\ \square$

\begin{corollary}
A G-S extension $\omega$ is trivial if and only if
$cls(z)=0$. 
\end{corollary}

\noindent\textbf{Proof.} Set $\omega^{\prime}=0$ and apply Theorem 1, Part 2. $\ \square$

\begin{corollary}
G-S extensions of a gHa $H$ are parametrized by $Z_{GS}
^{2,-1}\left(  H;H\right)  $ and classified up to isomorphism by
$H_{GS}^{2,-1}\left(  H;H\right)  .$ 
\end{corollary}

\begin{example}
\label{Ex1}Consider the $\mathbb{Z}_{2}$-dg algebra (dga)
\[
A=\left\langle 1,a_{2},a_{3},b_{3},a_{2}a_{3}=a_{3}a_{2}\right\rangle ,
\]
where $\left\vert x_{i}\right\vert =i,$ and the bar construction $BA$ with
standard differential $d_{BA}$, shuffle product $sh$, and cofree coproduct $\Delta_{BA}$. 
Denote a homogeneous element 
$\downarrow x_{1}\otimes \cdots\otimes\downarrow x_{n}\in BA$ by $[x_{1}|\cdots|x_{n}]$. 
Then $BA$  is a dgHa such
that $d_{BA}\left(  [a_{2}|a_{3}]+[a_{3}|a_{2}]\right)  =0,$ and $H_0:=H^{\ast
}\left(  BA\right)  $ is a gHa with induced product $\mu$ and
coproduct $\Delta.$ Let
$\alpha_{i}:=cls[a_{i+1}],\ \beta:=cls[b_{3}]$, and $\gamma:=\mu\left(
\alpha_{1}\otimes\alpha_{2}\right)  =cls([a_{2}|a_{3}]+[a_{3}|a_{2}]).$ Then
$\mu$ acts as the shuffle product except
\[
\mu\left(  \alpha_{i}\otimes\gamma\right)  =\mu\left(  \gamma\otimes\alpha
_{i}\right)  =0
\]
(by associativity) and $\Delta$ acts as the free coproduct except
\[
\Delta\gamma=1\otimes\gamma+\alpha_{1}\otimes\alpha_{2}+\alpha_{2}
\otimes\alpha_{1}+\gamma\otimes1
\]
(by Hopf compatibility). Define $\phi_{2}^{1},$ $\omega^1_3,$ and
$\omega^2_2$ to be zero except\smallskip%
\[
\phi_{2}^{1}\left(  \beta\otimes\beta\right)  :=\gamma,\text{ \ \ }
\omega^2_2\left(  \beta\otimes\beta\right)  :=\alpha_{1}\otimes\alpha
_{2}+\alpha_{2}\otimes\alpha_{1},\text{\ \ \ and }
\]
\[
\omega^1_3\left(  \beta\otimes\beta\otimes\beta\right)  :=\mu\left(
\beta\otimes\gamma\right)  \text{.}
\]
By direct calculation,
\[
\left(  \partial\phi_{2}^{1}\right)  \left(  \beta\otimes\beta\otimes
\beta\right)  =\mu\left(  \beta\otimes\gamma\right)  =\omega^1_3\left(
\beta\otimes\beta\otimes\beta\right)  \text{ \ and}
\]
\begin{align*}
\left(  \delta\phi_{2}^{1}\right)  \left(  \beta\otimes\beta\right)   &
=\left(  \left(  \mu\otimes\psi_{2}^{1}+\psi_{2}^{1}\otimes\mu\right)
\sigma_{2,2}\left(  \Delta\otimes\Delta\right)  +\Delta\psi_{2}^{1}\right)
\left(  \beta\otimes\beta\right) \\
&  =\alpha_{1}\otimes\alpha_{2}+\alpha_{2}\otimes\alpha_{1}=\omega
^2_2\left(  \beta\otimes\beta\right)  .
\end{align*}
Therefore
\[
D \phi_{2}^{1} =\partial\phi_{2}^{1} +\delta\phi_{2}^{1} =\omega^1_3
+\omega^2_2.
\]
Since $cls\left(  \omega^1_3+\omega^2_2\right)  =0$, the G-S extension
$\tilde{H}:=\left(  H,\mu,\Delta,\omega^1_3,\omega^2_2\right)  $ is
trivial by Theorem \ref{four}, Part 2, and indeed, $\Phi=\left\{
\mathbf{1}_{A},\psi_{2}^{1}\right\}  :\tilde{H}\Rightarrow H_{0}$ is an
isomorphism of $A_{4}$-bialgebras. 
\end{example}

The remainder of this article considers an induced $A_{\infty}$-bialgebra
structure $\omega$ on a particular loop cohomology $H$ and applies Theorem
\ref{four} to obtain a non-trivial G-S extension of the underlying gHa structure.

\section{A Topological Application}

\subsection{The Transfer Theorem and Algorithm}

Let $X$ be a space. Under mild conditions, the Transfer Algorithm induces a
canonical $A_{\infty}$-bialgebra structure on $A:=H^*(BA)\approx H^{\ast}\left(  \Omega
X;\mathbb{Z}_2\right)  .$\ We state the Transfer Theorem when $A$ is free; the Transfer Algorithm appears in the proof. For the
general case and a proof of uniqueness, which is omitted, see \cite{SU2}.

\begin{theorem}
[\textbf{The Transfer Theorem}]\label{transfer}Let $\left(  A,d_{A}\right)  $
be a free dgm, let $(B,$ $d_{B},\omega_{B})$ be an $A_{\infty} $-bialgebra,
and let $g:A\rightarrow B$ be a chain map/homology isomorphism. Then $g$
induces an $A_{\infty}$-bialgebra structure $\omega_{A}=\{\omega_{A}^{n,m}\}$
on $A$ and extends to a map $G=\{g_{m}^{n}: g_{1} ^{1}=g\}:A\Rightarrow
B$\ of  $A_{\infty}$-bialgebras. Furthermore, $\omega_{A}$ and $G$ are unique
up to isomorphism. 
\end{theorem}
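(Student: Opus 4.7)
The plan is a simultaneous induction on total arity $N = m+n$, constructing the operations $\omega_A^{n,m}$ and the morphism components $g_m^n$ in tandem so that, at each stage, both the $KK_{n,m}$ and $JJ_{n,m}$ structure relations are satisfied. This is the matrad adaptation of the Kadeishvili--Merkulov transfer scheme for $A_\infty$-algebras. The base $N = 1$ forces $\omega_A^{1,1} := d_A$ and $g_1^1 := g$; the only relation to verify at this stage, $JJ_{1,1}$, is the chain-map condition $gd_A = d_B g$, which holds by hypothesis.

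For the inductive step, fix $N \geq 2$, assume $\omega_A^{n',m'}$ and $g_{m'}^{n'}$ have been constructed for all $m'+n' < N$ satisfying their respective structure relations, and fix a pair $(n,m)$ with $m+n = N$. Set $O_A := \alpha_A(\partial \theta_m^n) \in Hom^{4-m-n}(A^{\otimes m}, A^{\otimes n})$; this element is determined entirely by the inductive data, and $\partial^2 \theta_m^n = 0$ together with the lower $KK$ relations forces $\nabla O_A = 0$. To obtain $\omega_A^{n,m}$ with $\nabla \omega_A^{n,m} = O_A$---i.e.\ the $KK_{n,m}$ relation---one needs $[O_A] = 0$ in the cohomology of $Hom(A^{\otimes m}, A^{\otimes n})$. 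Push $O_A$ forward by $g^{\otimes n}$: using the $JJ$ relations of arity $<N$ and the $KK$ structure on $B$, the cocycle $g^{\otimes n}O_A$ coincides---modulo a manifest $\nabla$-boundary---with an expression in $\omega_B$ and the previously constructed $g_{m'}^{n'}$ that is itself a $\nabla$-coboundary in $Hom(A^{\otimes m}, B^{\otimes n})$. Freeness of $A$ together with the K\"unneth theorem ensures that $g^{\otimes n}$ is a quasi-isomorphism on hom-complexes, so $[g^{\otimes n}O_A] = 0$ implies $[O_A] = 0$, and $\omega_A^{n,m}$ can be selected. Once $\omega_A^{n,m}$ is fixed, the right-hand side $\beta(\partial \mathfrak{f}_m^n)$ of the $JJ_{n,m}$ relation is fully defined and, by the same computation, is a $\nabla$-coboundary; pick $g_m^n$ as a bounding chain. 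Freeness of each $A^{\otimes m}$ ensures all such selections can be realized as $R$-linear maps of the correct degree.

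The step I expect to be the main obstacle is the matrad-level identity that forces $g^{\otimes n}O_A$ to be a coboundary. Concretely, one must show that the cellular decomposition of $\partial \theta_m^n$ in $CC_\ast(KK_{n,m})$ is compatible, after hom-pushforward by $g$, with the cellular decomposition of $\partial \mathfrak{f}_m^n$ in $CC_\ast(JJ_{n,m})$---equivalently, that the $\alpha$ and $\beta$ under construction jointly form a chain map of relative matrads. This is the polytopal manifestation of $\partial^2 \mathfrak{f}_m^n = 0$ and is the technical core of the Transfer Algorithm; its verification requires an induction-nested combinatorial check against the cell structure of the bimultiplihedron. The sketch above addresses only existence for free $A$; uniqueness (deferred to \cite{SU2}) additionally exploits the contractibility of $JJ_{n,m}$.
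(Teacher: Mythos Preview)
Your outline is essentially the paper's Transfer Algorithm: simultaneous induction on total arity, building $\alpha_A$ and $\beta$ as chain maps of (relative) matrads, and using that $\tilde g := g^{\otimes n}\circ(-)$ is a quasi-isomorphism on hom-complexes to kill the obstruction cocycle $O_A = \alpha_A(\partial\theta_m^n)$. The identity $\tilde g(O_A) = \nabla\varphi$ that you describe informally is exactly the paper's key step, and your flag that it rests on a combinatorial check against the cell structure of $JJ_{n,m}$ is well placed.

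There is, however, one step you gloss over that the paper handles explicitly. Having chosen some primitive $b$ with $\nabla b = O_A$, the $JJ_{n,m}$ obstruction $\tilde g(b) - \varphi$ is a $\nabla$-\emph{cocycle}, not automatically a coboundary: the computation $\nabla(\tilde g(b) - \varphi) = \tilde g(O_A) - \nabla\varphi = 0$ gives only closedness. The paper corrects for this by choosing a class representative $u \in \tilde g_*^{-1}[\tilde g(b) - \varphi]$ and setting $\omega_A^{n,m} := b - u$; then $[\tilde g(\omega_A^{n,m}) - \varphi] = [\tilde g(b) - \varphi] - \tilde g_*[u] = 0$, and only \emph{now} can $g_m^n$ be chosen as a bounding cochain. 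Your sentence ``once $\omega_A^{n,m}$ is fixed, $\beta(\partial\mathfrak f_m^n)$ \ldots\ is a $\nabla$-coboundary'' is not justified for an arbitrary primitive of $O_A$; the freedom in the choice of $\omega_A^{n,m}$ must be spent precisely here, and it suffices because $\tilde g_*$ is surjective on cohomology.
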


\noindent\textbf{Proof} \textbf{(The Transfer Algorithm).} For $f\in Hom(A^{\otimes m},A^{\otimes n})$ define $\tilde{g}(f):=g^{\otimes n}f$ and note that
$\tilde{g}$ is a homology isomorphism since $A$ is free. We obtain an induced $A_\infty$-bialgebra  structure by simultaneously constructing a chain map $\alpha
_{A}:CC_{\ast}\left(  KK\right)  \rightarrow Hom(TA,TA)$ of matrads and a chain map
$\beta:CC_{\ast}\left(  JJ\right)  \rightarrow Hom(TA,TB)$ of relative matrads.

Thinking of $JJ^n_m$ as a subdivision of the cylinder $KK^n_m\times I,$
denote the top dimensional cells of $KK^n_m$ and $JJ^n_m$ by $\theta
_{m}^{n}$ and $\mathfrak{f}_{m}^{n}$, and identify the faces $KK^n_m%
\times0$  and $KK^n_m\times1$ of $JJ^n_m$ with $\theta_{m}^{n}\left(
\mathfrak{f} _{1}^{1}\right)  ^{\otimes m}$ and $\left(  \mathfrak{f}_{1}%
^{1}\right)   ^{\otimes n}\theta_{m}^{n},$ respectively. By hypothesis, there
is a map of  matrads $\alpha_{B}:CC_{\ast}(KK)\rightarrow(U_{B},\nabla)$ such
that  $\alpha_{B}(\theta_{m}^{n})=\omega_{B}^{n,m}.$

To initialize the induction, define $\mathcal{\beta}:CC_{\ast}\left(
JJ^1_1\right)  \rightarrow Hom^{0}\left(  A,B\right)  $ by $\beta\left(
\mathfrak{f}_{1}^{1}\right)  =g_{1}^{1}=g$, and extend $\mathcal{\beta}$ to
$CC_{\ast}\left(  JJ^1_2\right)  \rightarrow Hom^{-1}\left(  A\otimes A
,B\right)  $ and $CC_{\ast}\left(  JJ^2_1\right)  \rightarrow
Hom^{-1}(  A,$ $ B\otimes B)  $ 
in the following way: On the vertices
$\theta_{2}^{1}\left(  \mathfrak{f}_{1}^{1}\otimes\mathfrak{f}_{1}^{1}\right)
\in JJ^1_2$ and $\theta_{1}^{2}\mathfrak{f}_{1}^{1}\in JJ^2_1$, define
$\beta\left(  \theta_{2}^{1}\left(  \mathfrak{f}_{1}^{1}\otimes\mathfrak{f}
_{1}^{1}\right)  \right)  =\mu_{B}\left(  g\otimes g\right)  $ and
$\beta\left(  \theta_{1}^{2}\,\mathfrak{f}_{1}^{1}\right)  =\Delta_{B}g.$
Since $\mu_{B}\left(  g\otimes g\right)  $ and $\Delta_{B}g$
are $\nabla$-cocycles, and $\tilde{g}_{\ast}$ is an isomorphism, there exist
cocycles $\mu_{A}\in Hom^0(A\otimes A,A)$ and $\Delta_{A}\in Hom^0(A,A\otimes A)$ such that
$
\tilde{g}_{\ast}[\mu_{A}]=[\mu_{B}\left(  g\otimes g\right)
]\text{ \ and \ }\tilde{g}_{\ast}[\Delta_{A}]=[\Delta_{B}g]\,.
$
Thus $\left[  g\mu_{A}-\mu_{B}\left(  g\otimes g\right)
\right]  =\left[  \Delta_{B}g-\left(  g\otimes g\right)  \Delta_{A}\right]  =0,$ 
and there exist cochains $g_{2}^{1}\in Hom^{-1}(A,B\otimes B)$ and $g_{1}^{2} \in Hom^{-1}(A\otimes A,B)$ such that $\nabla g_{2}^{1}=g\mu_{A}-\mu_{B}\left(  g\otimes g\right)$
 and $\nabla g_{1}^{2}=\Delta_{B}g-\left(  g\otimes g\right)
\Delta_{A}\,.$ 

For $m+n=3$, define $\alpha_{A}:CC_{\ast}\left(  KK^n_m\right)
\rightarrow Hom^0(  A^{\otimes m},$ $A^{\otimes n})  $
by $\alpha
_{A}(\theta_{m}^{n}):=\omega_{A}^{n,m}$ and $\beta:CC_{\ast}\left(
JJ^n_m\right)  \rightarrow Hom^*\left(  A^{\otimes m},B^{\otimes n}\right)  $
by
\[
\begin{array}
[c]{rllll}
\beta(\mathfrak{f}_{m}^{n}) & := & g_{m}^{n}\in Hom^{-1}(A^{\otimes m},A^{\otimes n})\vspace{1mm}   \\
\beta(\mathfrak{f}_{1}^{1}\,\theta_{2}^{1}) & := & g\,\mu_{A}\in Hom^{0}(A\otimes A,A) & 
\vspace{1mm}  \\
\beta\big((\mathfrak{f}_{1}^{1}\otimes\mathfrak{f}_{1}^{1})\,\theta_{1}^{2}\big) & := &
(g\otimes g)\,\Delta_{A}\in Hom^{0}(A,A\otimes A)\,.  
\end{array}
\]

Inductively, given $m+n\geq 4,$ assume that for
$i+j<m+n$ there exists a map of matrads $\alpha_{A}:CC_{\ast}(
KK^j_i)  \rightarrow Hom^{3-i-j}\left(  A^{\otimes i},A^{\otimes j}\right)  $
and a map of relative matrads $\beta:CC_{\ast}(  JJ^j_i)
\rightarrow Hom^{2-i-j}(  A^{\otimes i},B^{\otimes j})  $ such that
$\alpha_{A}(\theta_{i}^{j})=\omega_{A}^{j,i}$ and $\beta(\mathfrak{f}_{i}
^{j})=g_{i}^{j}$. Thus we are given chain maps $\alpha_{A}:CC_{\ast}\left(
\partial KK^n_m\right)  \rightarrow Hom^{4-m-n}\left(  A^{\otimes m},A^{\otimes
n}\right)  $ and $\beta:CC_{\ast}\left(  \partial JJ^n_m\smallsetminus
\operatorname*{int}KK^n_m\times1\right)  \rightarrow Hom^{3-m-n}\left(  A^{\otimes
m},B^{\otimes n}\right)  .$ We wish to extend $\alpha_{A}$ to the top cell
$\theta_{m}^{n}$ of $KK^n_m$, and $\beta$ to the codimension 1 cell $\left(
\mathfrak{f}_{1}^{1}\right)  ^{\otimes n}\theta_{m}^{n}$ and the top cell
$\mathfrak{f}_{m}^{n}$ of $JJ^n_m$. Since $\alpha_{A}$ is a map of matrads,
the components of the cocycle
\[
z=\alpha_{A}\big(  CC_{\ast}(\partial KK^n_m)\big)  \in Hom^{4-m-n}\left(
A^{\otimes m},A^{\otimes n}\right)
\]
are expressed in terms of $\omega_{A}^{j,i}$ with $i+j<m+n;$ similarly, since
$\beta$ is a map of relative matrads, the components of the cochain
\[
\varphi=\beta\big(  CC_{\ast}(\partial JJ^n_m\smallsetminus
\operatorname*{int}KK^n_m\times1)\big)  \in Hom^{3-m-n}\left(  A^{\otimes
m},B^{\otimes n}\right)
\]
are expressed in terms of $\omega_{B},$ $\omega_{A}^{j,i}$ and $g_{i}^{j}$
with $i+j<m+n.$ Clearly $\tilde{g}\left(  z\right)  =\nabla\varphi;$ and
$\left[  z\right]  =\left[  0\right]  $ since $\tilde{g}$ is a homology
isomorphism. Now choose a cochain $b\in Hom^{3-m-n}\left(  A^{\otimes
m},A^{\otimes n}\right)  $ such that $\nabla b=z;$ then
$
\nabla\left(  \tilde{g}\left(  b\right)  -\varphi\right)  =\nabla\tilde
{g}\left(  b\right)  -\tilde{g}\left(  z\right)  =0.
$
Choose a class representative $u\in\tilde{g}_{\ast}^{-1}\left[  \tilde
{g}\left(  b\right)  -\varphi\right]  ,\ $set $\omega_{A}^{n,m}=b-u,$ and
define $\alpha_{A}\left(  \theta_{m}^{n}\right)  :=\omega_{A}^{n,m}.$ Then
$\left[  \tilde{g}\left(  \omega_{A}^{n,m}\right)  -\varphi\right]  =\left[
\tilde{g}\left(  b-u\right)  -\varphi\right]  =\left[  \tilde{g}\left(
b\right)  -\varphi\right]  -\left[  \tilde{g}\left(  u\right)  \right]
=\left[  0\right]  .$ Choose a cochain $g_{m}^{n}\in Hom^{2-m-n}$ $\left(
A^{\otimes m},B^{\otimes n}\right)  $ such that
$
\nabla g_{m}^{n}=g^{\otimes n}\omega_{A}^{n,m}-\varphi,
$
and define $\beta\left(  \mathfrak{f}_{m}^{n}\right)  :=g_{m}^{n}.$ To extend
$\beta$ as a map of relative matrads, define\linebreak $\beta\big((
\mathfrak{f}_{1}^{1})^{\otimes n}\,\theta_{m}^{n}\big):=g^{\otimes n}\,
\omega_{A}^{n,m}.$ Passing to the limit we obtain the desired maps $\alpha
_{A}$ and $\beta.$ $\ \square$

\subsection{Homotopy Gerstenhaber Algebras}

When a $1$-connected dga $\left(  A,d,\cdot\right)  $ over a field
$\mathbf{k}$ admits a hGa structure, it lifts to the bar construction $BA$
and  induces a Hopf compatible product $\mu_{BA}$ so that $BA$ is a dgHa.
Furthermore, the dgHa structure$\ $on $BA$ lifts to a gHa structure on
$H^{\ast}\left(  BA;\mathbf{k}\right)  $. Since such liftings are required in
the application below, we include a brief review of hGa's for completeness.
To  avoid sign complications, we limit our discussion to $\mathbb{Z}_{2}%
$-dga's  and follow the exposition given by Kadeishvili in \cite{Kade}; for a
general  exposition see \cite{GV}.

A (not necessarily $1$-connected or commutative) $\mathbb{Z}_{2}$-dga $\left(
A,d,\cdot\right)  $ is a \emph{homotopy Gerstenhaber algebra} (hGa)\emph{\ }%
if  there exist multilinear operations
\[
E:=\left\{  E_{0,1}=E_{1,0}=\mathbf{1}_{A}\right\}  \cup\{E_{1,q}:A\otimes
A^{\otimes q}\rightarrow A\}_{q\geq1}
\]
such that $|E_{1,q}|=-q,$ and satisfy the following relations:

\bigskip

\noindent$dE_{1,q}\left(  a;b_{1},\ldots,b_{q}\right)  +E_{1,q}\left(
da;b_{1},\ldots,b_{q}\right)  +\sum\nolimits_{i}E_{1,q}\left(  a;b_{1}
,\ldots,db_{i},\ldots,b_{q}\right)  $
\[
=b_{1}\cdot E_{1,q-1}\left(  a;b_{2},\ldots b_{q}\right)  +E_{1,q-1}\left(
a;b_{1},\ldots,b_{q-1}\right)  \cdot b_{q}
\]
\begin{equation}
+\sum\nolimits_{i}E_{1,q-1}\left(  a;b_{1},\ldots,b_{i}\cdot b_{i+1}
,\ldots,b_{q}\right)  \label{one}%
\end{equation}
\medskip

\noindent$E_{1,q}\left(  a_{1}\cdot a_{2};b_{2},\ldots b_{q}\right)
=a_{1}\cdot E_{1,q}\left(  a_{2};b_{1},\ldots b_{q}\right)  +E_{1,q}\left(
a_{1};b_{1},\ldots b_{q}\right)  \cdot a_{2}\vspace{-0.1in}$%

\begin{equation}
\hspace*{0.75in}+\sum\nolimits_{p=1}^{q-1}E_{1,p}\left(  a_{1};b_{1},\ldots
b_{p}\right)  \cdot E_{1,q-p}\left(  a_{2};b_{p+1},\ldots b_{q}\right)
\label{two}%
\end{equation}
\medskip

\noindent$E_{1,n}\left(  E_{1,m}\left(  a;b_{2},\ldots b_{m};c_{1}
,\ldots,c_{n}\right)  \right)  =\sum\nolimits_{0\leq i_{1}\leq j_{1}\leq
\cdots\leq i_{m}\leq j_{m}\leq n}$
\[
E_{1},_{m+n+\left(  i_{1}+\cdots+i_{m}\right)  -\left(  j_{1}+\cdots
+j_{m}\right)  }(a;c_{1},\ldots,c_{i_{1}},E_{1},_{j_{1}-i_{1}}\left(
b_{1};c_{i_{1}+1},\ldots,c_{j_{1}}\right)  ,
\]
\[
c_{j_{1}+1},\ldots,c_{i_{2}},E_{1},_{j_{2}-i_{2}}\left(  b_{2};c_{i_{2}
+1},\ldots,c_{j_{2}}\right)  ,c_{j_{2}+1},\ldots,c_{i_{m}},
\]
\begin{equation}
E_{1},_{j_{m}-i_{m}}\left(  b_{m};c_{i_{m}+1},\ldots,c_{j_{m}}\right)
,c_{j_{m}+1},\ldots,c_{n}). \label{three}%
\end{equation}
\medskip

\noindent Denote $E_{1,1}$ by $\smile_{1};$ setting $q=1,$ relations
(\ref{one}) and (\ref{two}) reduce to
\[
d\left(  a\smile_{1}b\right)  +da\smile_{1}b+a\smile_{1}db=a\cdot b+b\cdot a
\text{ \ and}
\]
\[
\left(  a\cdot b\right)  \smile_{1}c=a\cdot(b\smile_{1}c)+\left(  a\smile
_{1}c\right)  \cdot b.
\]
Thus $\smile_{1}$ measures the deviation of $\cdot$ from commutativity and is
a right derivation of the product. Setting $q=2,$ relation (\ref{one})
reduces  to
\[
dE_{1,2}\left(  a;b,c\right)  +E_{1,2}\left(  da;b,c\right)  +E_{1,2}\left(
a;db,c\right)  +E_{1,2}\left(  a;b,dc\right)
\]
\[
=a\smile_{1}\left(  b\cdot c\right)  +\left(  a\smile_{1}b\right)  \cdot
c+b\cdot\left(  a\smile_{1}c\right)  .
\]
Thus $\smile_{1}$ is a left derivation up to homotopy.

Let $\left(  A,d,\cdot\right)  $ be a $1$-connected dga with an hGa
structure $E$. Consider
the tensor coalgebra $BA\otimes BA$ with coproduct $\psi:=\sigma_{2,2}\left(
\Delta_{BA}\otimes\Delta_{BA}\right)  .$ Define $\psi^{\left(  0\right)  }
:=$\textbf{$1$} and $\psi^{\left(  k\right)  }:=\left(  \psi\otimes
\mathbf{1}^{\otimes k-1}\right)  \cdots\left(  \psi\otimes\mathbf{1}\right)
\psi,$ where $\mathbf{1}$ denotes the identity on $BA\otimes BA.$
Comultiplicatively extend the hGa structure maps $E_{0,1}=E_{1,0}
=\mathbf{1}_{A}$ as coalgebra maps $E_{0,1}:\left[  \ \right]  \mathbf{\otimes
}BA\rightarrow BA$ and $E_{1,0}:BA\otimes\left[  \ \right]  \rightarrow BA.$
Then $E_{0,1}$ and $E_{1,0}$ have degree zero, are undefined except with
respect to units, i.e., $E_{0,1}\left(  [\ ]\otimes\lbrack x]\right)
=E_{1,0}\left(  [x]\otimes\lbrack\ ]\right)  =[x]$, and generate the shuffle
product
\[
sh:=\sum_{k\geq1}\left(  E_{0,1}+E_{1,0}\right)  ^{\otimes k}\psi^{\left(
k-1\right)  }:BA\otimes BA\rightarrow BA.
\]
For example, $sh\left(  \left[  a|b\right]  \otimes\left[  c\right]  \right)
=\left(  E_{0,1}+E_{1,0}\right)  ^{\otimes3}\psi^{\left(  2\right)  }\left(
\left[  a|b\right]  \otimes\left[  c\right]  \right)  =\left[  a|b|c\right]
+\left[  a|c|b\right]  +[c|a|b].$

In general, the dgHa structure of $\left(  BA,d_{BA},\Delta_{BA},sh\right)  $
fails to induce a gHa structure on $H=H^{\ast}\left(  BA\right)  $. However,
an induced gHa structure $\left(  H,\Delta,\mu\right)  $ is obtained by
comultiplicatively extending the hGa structure and perturbing the shuffle
product, i.e.,
\[
\mu_{BA}:=\sum_{k\geq1}\left(  E_{0,1}+E_{1,0}+E_{1,1}+E_{1,2}+\cdots\right)
^{\otimes k}\psi^{\left(  k-1\right)  }:BA\otimes BA\rightarrow BA.
\]
Then for example, $\mu_{BA}\left(  \left[  a\right]  \otimes\left[  b\right]
\right)  =\left[  a|b\right]  +[b|a]+\left[  a\smile_{1}b\right]  ,$ and in
particular, $\mu_{BA}\left(  \left[  a\right]  \otimes\left[  a\right]
\right)  =\left[  a\smile_{1}a\right]  $.

\subsection{A Non-trivial G-S Extension of Loop Cohomology}

Let $Y:=\left(  S^{2}\times S^{3}\right)  \vee\Sigma\mathbb{C}P^{2}$ and
consider the total space $X$ of the $2$-stage Postnikov system
\[
\begin{array}
[c]{ccccc}%
K\left(  \mathbb{Z}_{2},4\right)  & \longrightarrow & X &  & \\
&  & \downarrow &  & \\
&  & Y & \overset{f}{\longrightarrow} & K\left(  \mathbb{Z}_{2},5\right)
\text{ .}\\
&  & a_{2}a_{3}+Sq^{2}b & \underset{f^{\ast}}{\longleftarrow} & \iota_{5}%
\end{array}
\]
Denote the generators of $A:=H^{\ast}\left(  X;\mathbb{Z}_{2}\right)  $ by
$a_{i}\in H^{i}\left(  S^{i};\mathbb{Z}_{2}\right)  ,$ $\left\{
b,Sq^{2}b\right\}  \in$\linebreak$H^{\ast}(\Sigma\mathbb{C}P^{2}
;\mathbb{Z}_{2}),$ and $\left\{  Sq^{1}\iota_{4},Sq^{2}\iota_{4}
,\ldots\right\}  \in H^{\ast}\left(  \mathbb{Z}_{2},4;\mathbb{Z}_{2}\right)
.$ The hGa structure of $A$ is non-degenerate with $E_{1,1}:A\otimes
A\rightarrow A$ given by
\[
E_{1,1}\left(  b\otimes b\right)  =Sq^{2}b=a_{2}a_{3}.
\]
The bar construction $BA$ with standard differential $d$ and cofree coproduct
$\Delta_{BA}$ is a dg coalgebra. Note that $d(\left[  a_{2}|a_{3}\right]
+\left[  a_{3}|a_{2}\right]  )=0.$ Lift $E_{1,0},$ $E_{0,1},$ and $E_{1,1}$
to  $BA$ and extend as coalgebra maps. Then $\mu_{BA}$ acts as the shuffle
product  except
\[
\mu_{BA}\left(  \left[  b\right]  \otimes\left[  b\right]  \right)  =\left[
a_{2}a_{3}\right]  =d\left[  a_{2}|a_{3}\right]  ,
\]
$(BA,d,\Delta_{BA},\mu_{BA})$ is a dgHa, and $H:=H^{\ast}\left(
BA;\mathbb{Z}_{2}\right)  \approx H^{\ast}\left(  \Omega X;\mathbb{Z}
_{2}\right)  $ as modules.

Let $\alpha_{i-1}:=cls[a_{i}],\ \beta:=cls[b],$ and $\gamma:=cls([a_{2}
|a_{3}]+[a_{3}|a_{2}]);$ then the induced product and coproduct $\mu$ and
$\Delta$ on $H$ act as in Example \ref{Ex1} so that $\left(  H,\mu,\Delta\right)
$ is a gHa. Represent $\gamma$ by $\bar{\gamma}:=\left[  a_{2}|a_{3}\right]
+\left[  a_{3}|a_{2}\right]  ,$ a generator $x\neq\gamma$ by $\bar{x}:=\left[
\uparrow x\right]  $, and a general class $y_{1}|\cdots|y_{n}$ by $\bar{y}
_{1}|\cdots|\bar{y}_{n}$. Define a cocycle-selecting homomorphism
$g:H\rightarrow BA$ by $g\left(  y_{1}|\cdots|y_{n}\right) : =\bar{y}
_{1}|\cdots|\bar{y}_{n}$; then the Transfer Algorithm transfers the dgHa
structure on $BA$ to an $A_{\infty}$-bialgebra structure on $H$ along $g$,
which specializes to a strict $A_{k}$-bialgebra structure for each $k\geq3$.

S. Saneblidze was the first to consider hGa's with non-trivial actions of the
Steenrod algebra $\mathcal{A}_{2}$ in \cite{Sane}. In general, the Steenrod
$\smile_{1}$-cochain operation together with other higher cochain operations
induce a non-trivial hGa structure on $S^{\ast}\left(  X;\mathbb{Z}
_{2}\right)  $, but the failure of the differential to be a $\smile_{1}
$-derivation prevents an immediate lifting of the hGa structure to cohomology
(for some remarks on the history of lifting a $\smile_{1}$-operation on
homology see \cite{KS} and \cite{Sane}).

When no multiplicative map $A\rightarrow C$ of dga's exists, as is the case
when $A=BH^{\ast}(X;\mathbb{Z}_{2})$ and $C=S^{\ast}(\Omega X;\mathbb{Z}_{2}
)$, there may exist a family of dga's $\{B_{i}\}$ and a zig-zag of
multiplicative maps $A\leftarrow B_{1}\cdots B_{k}\rightarrow C$. Indeed, in
our application we have $BH^{\ast}(X;\mathbb{Z}_{2})\leftarrow B(RH^{\ast
}(X;\mathbb{Z}_{2}))\leftarrow B(R_{a}H^{\ast}(X;\mathbb{Z}_{2}))\rightarrow
B(S^{\ast}(X;\mathbb{Z}_{2})\rightarrow S^{\ast}(\Omega X;\mathbb{Z}_{2})$,
where the first is induced by the Hirsch resolution map $H^{\ast}
(X;\mathbb{Z}_{2})\leftarrow RH^{\ast}(X;\mathbb{Z}_{2})$, the second is
induced by the Hirsch resolution projection $RA\leftarrow R_{a}H^{\ast
}(X;\mathbb{Z}_{2})$, where $R_{a}H^{\ast}(X)$ denotes the Hirsch
(absolute) resolution of $H^{\ast}(X)$, the third is induced
by the Hirsch modeling map $R_{a}H^{\ast}(X;\mathbb{Z}_{2})\rightarrow
S^{\ast}(X;\mathbb{Z}_{2})$, and the fourth is standard. Under this
zig-zag, $H:=H^{\ast}\left(  BA;\mathbb{Z}_{2}\right)  $ is a gHa model for
$H^{\ast}\left(  \Omega X;\mathbb{Z}_{2}\right)  .$

\begin{proposition}
\label{opers}The gHa model $H\approx H^{\ast}(\Omega(X);\mathbb{Z})$ admits a topologically invariant induced G-S bialgebra structure $\{\omega^1_3
,\omega^2_2,\omega^3_1\}$ such that
\[
\omega^1_3\neq 0,\text{ }\omega^2_2\neq0,\text{ and }\omega^3_1\equiv 0.
\]
Thus $\left(  H,\mu,\Delta,\omega^2_2,\omega^1_3\right)  $ is a G-S
extension of $H$. 
\end{proposition}

\noindent\textbf{Proof.} First, by the Transfer Algorithm Theorem, there is a
cochain homotopy $g_{1}^{2}:H\rightarrow BA\otimes BA$ satisfying the
$JJ^2_1$ structure relation $\nabla g_{1}^{2}=\Delta_{BA}g+(g\otimes
g)\Delta.$ Since $\nabla g_{1}^{2}=0$ by the comultiplicativity of $g,$ we
may  choose $g_{1}^{2}=0.$ Dually, note that
\[
\left(  g\mu+\mu_{BA}(g\otimes g)\right)  (x\otimes y)=\left\{
\begin{array}
[c]{cc}%
\left[  a_{2}a_{3}\right]  , & x\otimes y=\beta\otimes\beta\\
0, & \text{otherwise.}%
\end{array}
\right.
\]
By the Transfer Algorithm, there is a cochain homotopy $g_{2}^{1}:H\otimes
H\rightarrow BA$ satisfying the $JJ^1_2$ structure relation $\nabla
g_{2}^{1}=g\mu+\mu_{BA}(g\otimes g)$ such that for some $i\in\left\{
2,3\right\}  $
\[
g_{2}^{1}(x\otimes y)=\left\{
\begin{array}
[c]{cc}%
\left[  a_{i}|a_{5-i}\right]  , & x\otimes y=\beta\otimes\beta\\
0, & \text{otherwise.}%
\end{array}
\right.
\]
Choose $i=2$ so that $g_{2}^{1}(\beta\otimes\beta)=\left[  a_{2}|a_{3}\right]
$ (the choice $i=3$ gives rise to an isomorphic structure); the analysis in
\cite{Umble2} implies
\[
\omega^2_2\left(  \beta\otimes\beta\right)  =\alpha_{1}\otimes\alpha_{2}.
\]

Second, by the Transfer Algorithm, there is a cochain homotopy $g_{1}
^{3}:H\rightarrow BA^{\otimes3}$ satisfying the $JJ^3_1$ structure relation
\begin{equation}
\nabla g_{1}^{3}=g^{\otimes3}\omega^3_1+\left(  g\otimes g_{1}^{2}+g_{1}
^{2}\otimes g\right)  \Delta+\left(  \Delta_{BA}\otimes\mathbf{1}
+\mathbf{1}\otimes\Delta_{BA}\right)  g_{1}^{2}+\omega_{BA}^{3,1}g.
\label{JJ31}%
\end{equation}
Since $\omega_{BA}^{3,1}=0$ and $g_{1}^{2}=0$ by the choice above,
(\ref{JJ31}) reduces to $\nabla g_{1}^{3}=g^{\otimes3}\omega^3_1=\tilde
{g}\left(  \omega^3_1\right)  $ and vanishes in cohomology. Since $H$ is
free as a $\mathbb{Z}_{2}$-module, $\tilde{g}:Hom^{\ast}(H,$ $H^{\otimes3})
\rightarrow Hom^{\ast}\left(  H,BA^{\otimes3}\right)  $ is a cohomology
isomorphism and it follows that
\[
\omega^3_1\equiv 0.
\]

Dually, there is a cochain homotopy $g_{3}^{1}:H^{\otimes3}\rightarrow BA$
satisfying the $JJ^1_3$ structure relation
\begin{equation}
\nabla g_{3}^{1}=g\omega^1_3+\mu_{BA}\left(  g\otimes g_{2}^{1}+g_{2}
^{1}\otimes g\right)  +g_{2}^{1}\left(  \mu\otimes\mathbf{1}+\mathbf{1}
\otimes\mu\right)  +\omega_{BA}^{1,3}g^{\otimes3}. \label{JJ13}%
\end{equation}
For simplicity let $\phi_{3}^{1}:=\mu_{BA}\left(  g\otimes g_{2}^{1}+g_{2}
^{1}\otimes g\right)  +g_{2}^{1}\left(  \mu\otimes\mathbf{1}+\mathbf{1}
\otimes\mu\right)  $ and note that
\[
\phi_{3}^{1}\left(  \beta\otimes\beta\otimes\sigma\right)  =\phi_{3}
^{1}\left(  \sigma\otimes\beta\otimes\beta\right)  =\left\{
\begin{array}
[c]{cc}%
\mu_{BA}\left(  \left[  a_{2}|a_{3}\right]  \otimes\bar{\sigma}\right)  , &
\sigma\neq1,\beta\\
0, & \text{otherwise.}%
\end{array}
\right.
\]
Since $\omega_{BA}^{1,3}=0,$ it follows that $\nabla g_{3}^{1}=g\omega
^1_3+\phi_{3}^{1}$ so that $g\omega^1_3$ and $\phi_{3}^{1}$ are
cohomologous in $Hom\left(  H^{\otimes3},BA\right)  $. Since $\tilde{g}$ is a
cohomology isomorphism, it follows that
\[
\omega^1_3\left(  \beta\otimes\beta\otimes\sigma\right)  =\omega
^1_3\left(  \sigma\otimes\beta\otimes\beta\right)  =\left\{
\begin{array}
[c]{cc}%
\mu\left(  \alpha_{1}|\alpha_{2}\otimes\sigma\right)  , & \sigma\neq1,\beta\\
0, & \text{otherwise.}%
\end{array}
\right.
\]

Finally, $\omega$ is invariant by uniqueness in the Transfer Theorem.
$\ \square$

\begin{proposition}
\label{prop}The G-S extension in Proposition \ref{opers} is non-trivial. 
\end{proposition}

\noindent\textbf{Proof.} By Theorem \ref{four}, Part 1, the cochain
$\omega^2_2+\omega^1_3\in Z_{GS}^{2,-1}\left(  H;H\right)  $. I claim
$cls(\omega^2_2+\omega^1_3)\neq0$.

Suppose $f=f_{1}^{2}\in Hom^{-1}(H,H^{\overline{\otimes}2})$ satisfies
$\partial f=\omega^2_2.$ Since $\omega^2_2\left(  \beta\otimes
\beta\right)  =\alpha_{1}\otimes\alpha_{2}\ $by the proof of Proposition
\ref{opers}, evaluating at $\beta\otimes\beta$ gives
\[
(\mu\otimes\mu)\sigma_{2,2}\big(\Delta(\beta)\otimes
f(\beta)+f(\beta)\otimes\Delta(\beta)\big)=\alpha_{1}\otimes\alpha_{2}.
\]
Since each component of the left-hand side 
has a factor involving $\beta$, this is impossible.

Suppose $f=f_{2}^{1}\in Hom^{-1}(H^{\otimes\underline{2}},H)$ satisfies
$\delta f=\omega^2_2;$ then
\[
(\mu\otimes f+f\otimes\mu)\sigma_{2,2}(\Delta\otimes\Delta
)+\Delta f=\omega^2_2.
\]
Note that $\mu\left(  \beta\otimes\beta\right)  =0$ since $\mu$ acts as the
shuffle product, and $f\left(  1\otimes1\right)  =0$ for dimensional reasons.
Evaluating the left-hand-side at $\beta\otimes \beta$ we obtain
\[
1\otimes f\left(  \beta\otimes\beta\right)  +\beta\otimes f\left(
1\otimes\beta\right)  +\beta\otimes f\left(  \beta\otimes1\right).
\]
Thus the required (primitive) component $f\left( \beta\otimes\beta\right)  \otimes1$ 
is missing from
\[
\Delta f\left(  \beta\otimes\beta\right)  =1\otimes f\left(  \beta\otimes
\beta\right)  +\beta\otimes f\left(  1\otimes\beta\right)  +\beta\otimes
f\left(  \beta\otimes1\right)  +\alpha_{1}\otimes\alpha_{2},
\]
and this too is impossible.

Therefore $
D(f_{1}^{2}+f_{2}^{1})\neq\omega^2_2+\omega^1_3$\
for all $f_{1}^{2}\in Hom^{-1}(H,H^{\overline{\otimes}2})$ and all $f_{2}
^{1}\in Hom^{-1}(H^{\otimes\underline{2}},H),$ and it follows that
$cls(\omega^2_2+\omega^1_3)\neq0$ as claimed. The conclusion follows by
Theorem \ref{four}, Part 2. $\ \square$\medskip

There is a family of spaces $\{X_k\}_{k\geq 3}$ whose loop cohomology 
$H^{\ast}(\Omega X_{k};\mathbb{Z}_{2})$ admits an induced topologically invariant 
$A_{\infty}$-bialgebra structure $\{\omega_k^{j,i}\}_{i+j>3}$  such that 
$\omega_k^{k,2}\neq0$ for each $k\geq2$  
(see  Example 12.5 in \cite{SU2}).
Unfortunately, when $k\geq3$ the required $KK$ structure relations cannot be
expressed in terms of the G-S differentials, and Theorem \ref{four} does
not  apply. We would welcome an enrichment of the G-S complex that supports
such  expressions. One approach might be to extend the G-S complex to a
multicomplex with additional differentials defined in terms of the higher
order operations. 

Finally, the dgHa model $H\approx H^*(\Omega X;\mathbb{Z}_2)$ in our application admits an
$A_k$-bialgebra structure for each $k\geq 3$.  It would be nice to have a family 
of spaces $\{X_k\}_{k\ge 3}$ such that 
$H^*(X_k)$ admits an $A_k$ but not an $A_{k+1}$-bialgebra structure. 
We leave this problem for the reader.\medskip

\noindent\textbf{Acknowledgement. }I wish to thank Jim Stasheff for his
thoughtful comments and probing questions. \vspace{-.1in}

\end{document}